\definecolor{refblue}{RGB}{26,13,171}
\numberwithin{equation}{section}
\numberwithin{lemma}{section}
\numberwithin{definition}{section}
\numberwithin{assumption}{section}
\numberwithin{theorem}{section}
\numberwithin{proposition}{section}
\numberwithin{corollary}{section}
\numberwithin{remark}{section}
\def\cA{{\mathcal A}}
\def\cB{{\mathcal B}}
\def\cE{{\mathcal E}}
\def\cF{{\mathcal F}}
\def\cO{{\mathcal O}}
\def\cV{{\mathcal V}}
\newcommand{\cone}{{\sf cone}}
\renewcommand{\ker}{{\sf ker}}
\newcommand{\range}{{\sf rge}}
\newcommand{\trace}{{\sf trace}}
\renewcommand\Re{{\mathbb R}}
\newcommand{\boundary}{{\sf bdry}}
\newcommand{\conv}{{\sf conv}}
\title{On the $p$-order Semismoothness of the Metric Projection onto Slices of the Positive Semidefinite Cone \thanks{{This work was funded in part by the Hong Kong Research Grants Council under GRF project 15305324 and the RGC Senior Research Fellow scheme SRFS2223-5S02.}}
}
\author{
Ruoning Chen \and 
Jiaming Ma \and
Defeng Sun
}
\institute{
Ruoning Chen
\at
Department of Mathematical Sciences, Tsinghua University, Beijing, China; and 
Department of Applied Mathematics, The Hong Kong Polytechnic University, Hung Hom, Hong Kong
\\
\email{crn22@mails.tsinghua.edu.cn; ruoning.chen@polyu.edu.hk} 
\and
Jiaming Ma
\at
Department of Applied Mathematics, The Hong Kong Polytechnic University, Hung Hom, Hong Kong
\\
\email{jiaming.ma@connect.polyu.hk} 
\and
Defeng Sun,  Corresponding Author
\at
Department of Applied Mathematics, The Hong Kong Polytechnic University, Hung Hom, Hong Kong
\\
\email{defeng.sun@polyu.edu.hk}  
}
\titlerunning{
$p$-order Semismoothness of PSD Cone Slices
}
\authorrunning{
Ruoning Chen, Jiaming Ma, Defeng Sun
}
\date{Received: date / Accepted: date}
\begin{document}

\maketitle

\begin{abstract}

The metric projection onto the positive semidefinite (PSD) cone is strongly semismooth, a property that guarantees local quadratic convergence for many powerful algorithms in semidefinite programming. In this paper, we investigate whether this essential property holds for the metric projection onto an affine slice of the PSD cone, which is the operator implicitly used by many algorithms that handle linear constraints directly. Although this property is known to be preserved for the second-order cone, we conclusively demonstrate that this is not the case for the PSD cone. Specifically, we provide a constructive example  that for any $p > 0$, there exists an affine slice of a PSD cone for which the metric projection operator fails to be $p$-order semismooth. This finding establishes a fundamental difference between the geometry of the second-order cone and the PSD cone and necessitates new approaches for both analysis and algorithm design for linear semidefinite programming problems. 

\keywords{
Semismoothness \and
LMI-representable set \and
Slice of PSD cone
}

\subclass{49J52 \and 49M15 \and 90C22}

\end{abstract}

%main contexts

\section{Introduction}

Semidefinite programming (SDP) has emerged as a powerful framework for modeling a wide range of problems across engineering, control theory, and combinatorial optimization \cite{VB96}. A central challenge in the field is the development of robust and efficient numerical methods capable of solving these problems with high precision. For this purpose, Newton-type methods are particularly desirable because of their characteristically rapid local convergence. However, key operators in SDP, such as the metric projection onto the feasible set \cite{Za71}, are nonsmooth, which prevents the direct application of classical Newton-based approaches. This challenge has spurred the development of a broader analytical framework based on generalized derivatives, giving rise to the powerful class of nonsmooth  Newton methods \cite{Kummer88,QS93}.

The development of semismooth Newton methods, in particular, has demonstrated that establishing the semismoothness of operators involved can lead to proofs of local superlinear or even quadratic convergence \cite{QS93}. For a given positive integer $n$, let $S^n_+$ be the  cone of $n$ by $n$ real symmetric and positive semidefinite (PSD) matrices.    A PSD cone refers to   $S^n_+$  for some integer $n\ge 1$. 
In the context of SDP, a foundational result is that the metric projection operator onto $S^n_+$, denoted by $\Pi_{S^n_+}$, is strongly semismooth \cite{SS02}. This favorable property has become the basis for many solvers   such as the  Newton-CG  augmented Lagrangian methods based solvers for solving large-scale SDPs \cite{YST15,ZST10}.  

However, the feasible set of a standard SDP problem is not the entire PSD cone, but rather a slice of it, i.e., the intersection of the PSD cone with an affine subspace $L$. The practical performance of many algorithms therefore depends not on $\Pi_{S^n_+}$ but on the metric projection operator  onto this constrained set $\Pi_{S^n_+ \cap L}$. This naturally leads to a central and consequential question: Does the strong semismoothness of the metric projection onto the PSD cone carry over to the metric projection onto its affine slices?

An affirmative answer to this question would have significant implications, suggesting that a broad class of algorithms enjoys local quadratic convergence when applied directly to linear SDPs, without moving the affine constraint $A(x)=b$ from the feasible set into a penalty term in the objective function. Support for such a conjecture can be drawn from the analogous situation for the second-order cone (SOC). It is a well-established result that the metric  projection onto the SOC is strongly semismooth \cite{ChenSS2003}. Crucially, this property has been shown to be preserved for its affine slices; that is, the metric projection onto the intersection of an SOC and an affine subspace also remains strongly semismooth \cite{WX11}. Therefore, it is natural to conjecture that an analogous result may hold in the SDP setting.

However, the geometric structure of the PSD cone is known to be substantially more complex than that of the SOC. The facial structure is more intricate, and the boundary exhibits a higher degree of algebraic complexity, which often complicates the extension of results from the SOC setting \cite{Pataki00}. On the other hand, the metric projection onto a slice of PSD cone can be written in a constraint convex quadratic SDP problem. When the constraint nondegeneracy holds at the desired point $y$, the KKT solution mapping $(x(\cdot),\lambda(\cdot))$ is strongly semismooth at $ y$ \cite{Sun06,S01,MengSS05} and thus,   the metric projection part $x(\cdot)$ is strongly semismooth at $  y$. Apart from this ideal nondegenerate case, what happens outside is far less understood. 

In this paper, we provide a definitive but negative answer to this question. We demonstrate that, unlike the second-order cone case, the metric projection onto an affine slice of the PSD cone can suffer a dramatic loss of strong semismoothness. Specifically, for any $p>0$, we construct an explicit example where the metric projection operator $\Pi_{S^n_+ \cap L}$ fails to be $p$-order semismooth. This finding reveals a fundamental geometric disparity between the second-order cone and the positive semidefinite cone  and implies that the analytical foundation for rapid superlinear convergence (when $p$ near 1) of standard semismooth Newton methods does not hold universally for linear SDPs. Our result therefore clarifies a key theoretical boundary in conic optimization and underscores the need for more nuanced approaches to both the analysis and design of algorithms for this subclass of semidefinite programs.

\section{Notation and preliminaries}
Let $\cE$ and $\cF$ be two finite-dimensional real Hilbert spaces each endowed with an inner product $\langle \cdot,\cdot\rangle$ and its induced norm $\|\cdot\|$.
For a vector $z\in\cE$ (or a subspace $\cE_0\subseteq\cE$), $z^\perp$ (or $\cE_0^\perp$) denotes its orthogonal complement in $\cE$.
Given a cone $C\subseteq\cE$,
$C^\circ := \{ v \in \cE \mid \langle v, z \rangle\le 0 \  \forall \, z\in C \}$ is the polar cone of $C$. 
For a linear operator $\cA:\cE\to\cF$, we use $\cA^*, \range (\cA)$ and $\ker(\cA)$ to denote its adjoint, range space, and null space, respectively. Note that $\range (\cA)=(\ker (\cA^*))^\perp$. Given a set $C\subseteq\cE$, we use $\boundary(C)$ to denote the strong topological boundary of $C$.

Given a matrix $A\in\Re^{l\times q}$, we use $A_{ik}$ or $A_{i,k}$ to denote the entry at the $i$-th row and the $k$-th column of $A$.
Given two matrices $A,B\in\Re^{l\times q}$, the inner product of $A,B$ is defined by $\langle A, B\rangle=\trace(A^\top B)$, and $\|A\|=\sqrt{\langle A, A\rangle}$ is the Frobenius norm. The set of all symmetric matrices in $\Re^{l\times l}$ is denoted by $S^l$. The direct sum of two matrices $A$ and $B$, denoted as $A\oplus B$, is the block diagonal matrix defined as:
$$
A\oplus B:=
\begin{pmatrix}
    A & 0 \\
    0 & B
\end{pmatrix}.
$$
%The set of positive semidefinite %(PSD) cone in $S^l$ is defined by %$S^l_+$.

Let $C\subset \cE$ be a closed convex set. For a given $x\in \cE$, its metric projection onto $C$, denoted by $\Pi_C(x)$, is the unique    point in $C$  that has the minimum distance from  $x$. It is well known \cite{Za71} that for  $x\in \cE$, it holds that 
\[
\bar x := \Pi_C(x) \iff \langle x-\bar x, u-\bar x\rangle \le 0 \ \forall\, u\in C. \]
The normal cone of $C$ at $\bar z$ \cite{rocconv} is defined by
$$
N_C(\bar z)=
\begin{cases}
\{v\in\cE \mid \, \langle v, z-\bar z\rangle\le 0 
\ \forall z\in C\},
&\mbox{if } \bar z\in C,
\\
\emptyset,
&\mbox{otherwise}.
\end{cases}
$$
Note that $N_{C^{\circ}}(z)=(C^{\circ})^{\circ}\cap z^{\perp}=C\cap z^{\perp}$ when $C$ is also a cone.
To facilitate the analysis in Section 3, we state the following well-known H\"older's inequality and its equality condition. 

\begin{lemma}[H\"older's Inequality {\cite{Hardy1952}}]
\label{lem:holder}
Let $p, q \in (1, \infty)$ satisfy $\frac{1}{p}+\frac{1}{q}=1$. For any vectors $x,y\in \Re^n$, it holds that
\begin{equation*}
\sum_{i=1}^{n} |x_i y_i| \le \left( \sum_{i=1}^{n} |x_i|^p \right)^{1/p} \left( \sum_{i=1}^{n} |y_i|^q \right)^{1/q}, 
\end{equation*}
where the equality holds if and only if there exists a constant $c \ge 0$ such that $|x_i|^p = c|y_i|^q$ for all $i \in \{1, \dots, n\}$.
\end{lemma}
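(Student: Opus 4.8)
The plan is to reduce Hölder's inequality to the pointwise \emph{Young's inequality} and then to track the equality case carefully through a single normalization step. First I would establish Young's inequality: for any $a,b\ge 0$ and conjugate exponents $p,q\in(1,\infty)$ with $1/p+1/q=1$, one has $ab\le a^p/p + b^q/q$, with equality precisely when $a^p=b^q$. For $a,b>0$ this follows by writing $ab=\exp\big(\tfrac1p\log a^p+\tfrac1q\log b^q\big)$ and invoking the strict concavity of the logarithm (equivalently Jensen/weighted AM--GM with weights $1/p$ and $1/q$); strict concavity is exactly what forces the equality case to be $a^p=b^q$. The boundary cases $a=0$ or $b=0$ are immediate.

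Next I would reduce to the normalized setting. Set $X:=\big(\sum_i |x_i|^p\big)^{1/p}$ and $Y:=\big(\sum_i|y_i|^q\big)^{1/q}$. If $X=0$ or $Y=0$, then $x=0$ or $y=0$ and both sides of the claimed inequality vanish, so it holds. Assuming $X,Y>0$, define $a_i:=|x_i|/X$ and $b_i:=|y_i|/Y$, so that $\sum_i a_i^p=1$ and $\sum_i b_i^q=1$. Applying Young's inequality to each pair $(a_i,b_i)$ and summing over $i$ gives
\[
\sum_{i=1}^n a_i b_i \;\le\; \frac1p\sum_{i=1}^n a_i^p+\frac1q\sum_{i=1}^n b_i^q \;=\;\frac1p+\frac1q \;=\;1,
\]
and multiplying through by $XY$ yields $\sum_i |x_i y_i|\le XY$, the desired inequality.

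Finally I would characterize equality. In the nondegenerate case $X,Y>0$, equality in the summed bound holds if and only if Young's inequality is an equality for every index $i$, i.e.\ $a_i^p=b_i^q$ for all $i$. Substituting back the definitions gives $|x_i|^p/X^p=|y_i|^q/Y^q$, so with $c:=X^p/Y^q\ge 0$ we obtain $|x_i|^p=c\,|y_i|^q$ for all $i$; conversely this proportionality turns every use of Young's inequality into an equality and hence the overall bound into an equality. The main obstacle I anticipate is \emph{not} the inequality itself but the bookkeeping of the equality case: one must verify that a single nonnegative constant $c$ works simultaneously for all indices (which is where the normalization by $X$ and $Y$ does the essential work) and one must treat the degenerate subcases $X=0$ or $Y=0$ with care, since there both sides vanish and the proportionality condition is to be read in the natural limiting sense (taking $c=0$ when $x=0$).
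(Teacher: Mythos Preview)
The paper does not prove this lemma; it is stated as a well-known result with a citation to Hardy--Littlewood--P\'olya and no argument is given. Your proposal is the standard textbook proof via Young's inequality and normalization, and it is correct. The only loose end you already flag yourself: the equality characterization as \emph{stated} in the lemma is mildly asymmetric in the degenerate case $y=0$, $x\neq 0$ (both sides vanish, yet no finite $c\ge 0$ gives $|x_i|^p=c\cdot 0$), so strictly speaking one should read the condition as proportionality of the sequences $(|x_i|^p)$ and $(|y_i|^q)$; this is a quirk of the lemma's phrasing rather than a gap in your argument.
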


\subsection{Generalized differentials}
Suppose that a function $f:\cE\to\cF$ is locally Lipschitz continuous around some point $\bar x\in \cE$. Then, by Rademacher's theorem \cite[Theorem 9.60]{varbook}, $f$ is almost everywhere differentiable in a neighborhood $\cV$ of $\bar x$. We use $D_f\subseteq \cV$ to denote the set of points at which $f$ is differentiable.
The Bouligand subdifferential of $f$ at $\bar x$ is defined by 
$$
\partial_B f(\bar x):=\{ v\in \Re^{m\times n} \mid \exists \,x_k\stackrel{D_f}\longrightarrow \bar x \text{ with } f'(x_k)\to v\}. 
$$
Moreover, the Clarke generalized Jacobian of $f$ at $\bar x$ is the convex hull of $\partial_B f(\bar x)$, i.e.,  
$$
{\partial}f(\bar x):=\conv(\partial_B f(\bar x)).
$$
With this preparation, we can present the definition of $p$-order semismoothness \cite{QS93}.

\begin{definition}\label{def:ssn}
Let $f:\cE\to\cF$ be a locally Lipschitz continuous function, and let $\partial F(x)$ denote the Clarke generalized Jacobian of $F$ at $x \in \cE$.  
Let $p>0$.  We say that $F$ is \emph{$p$-order semismooth} at $x$ if:
\begin{enumerate}
    \item $f$ is directionally differentiable at $x$.
    \item for any $V \in \partial F(x+h)$, as $h \to 0$,
    \begin{equation}\label{eq:ssn1}
        \| f(x+h) - f(x) - Vh \| = \cO(\|h\|^{1+p}).
    \end{equation}
\end{enumerate}
We say that the function $f$ is \emph{strongly  semismooth} at $x$ if it is  $1$-order semismooth  at $x$.
\end{definition}

According to \cite[Theorem 2.3]{QS93}, we know that \eqref{eq:ssn1} in Definition \ref{def:ssn} can be replaced by
\begin{equation}
    \label{eq:ssn2}
    f(x+h)-f(x)-f'(x+h;h)=\cO(\|h\|^{1+p}),
\end{equation}
where $f'(x+h;h)$ means the directional derivative of $f$ at $x+h$ along the direction $h$. Note that a function $\phi(t)$ is \emph{$\cO(t)$} if there exists a constant $c$ such that $\|\phi(t)\|\le c|t|$ for all sufficiently small $|t|$; it is \emph{$\Theta(t)$} if there exist constants $c_1,c_2>0$ such that $c_1|t| \le \|\phi(t)\| \le c_2|t|$ for all sufficiently small $|t|$.

\subsection{LMI-representable sets}
Following \cite[Equation (1.3)]{HN10}, we introduce the definition of LMI-representable set, which is the core tool of this research.

\begin{definition}[LMI‑representable set/Spectrahedron]
    A set $K\subseteq \Re^n$ is said to be LMI (Linear Matrix Inequality) representable or is a spectrahedron if there exist $m>0$ and $A_0,\dots,A_n\in S^m$ such that
    \begin{equation*}
        K=\{(x_1,\dots,x_n)\in \Re^n \mid A_0+A_1x_1+\dots+A_nx_n\in S^m_+\}.
    \end{equation*}
\end{definition}
LMI-representable sets are closely related to the slices of PSD cone. To illustrate this, consider the spectrahedron $K_1=\{x\in \Re^n\mid \cA(x)+A_0\in S^m_+\}$ for a linear operator $\cA: \Re^n \to S^m$. If $\cA$ is injective, then $K_1$ is linearly isomorphic to the set $K_2=S^m_+\cap \{z\in S^m\mid \cB z=b\}$ for some linear operator $\cB$ and vector $b$. The set $K_2$ is a slice of $S^m_+$. We have already known that the metric projection onto the PSD cone $\Pi_{S^m_+}$ is strongly semismooth \cite[Corollary 4.15]{SS02}. It is of our interest to study the strong semismoothness of $\Pi_{K_2}$, the metric projection onto a slice of PSD cone. This operator $\Pi_{K_2}$ is apparently a semi-algebraic Lipschitz function. Thus, for any fixed $\bar x$ we have a rational number $\gamma_{\bar x}>0$ such that  $\Pi_{K_2}(\cdot)$ is $\gamma_{\bar x}$-order semismooth at $\bar x$ \cite[Proposition 1]{BDL07}. Therefore, it is of curiosity for a given $p>0$, whether there exist $m>0,\cB$ and $b$ such that $\Pi_{K_2}$ fails to be $p$-order semismooth at some point. To answer this, we first study the semismoothness of $\Pi_{K_1}$ for an injective $\cA$, and take advantage of the linear isomorphism between $K_1$ and $K_2$. For this purpose, we first introduce the following lemma.

\begin{lemma}\label{lem:linear-iso}
    Let $\mathcal{A}:\cE\to\cF$ be an injective linear operator, and $K\subseteq\cE$ be a closed convex set. For any given $p>0$, the $p$-order semismoothness of $\Pi_K$ is equivalent to the $p$-order semismoothness of $\Pi_{\mathcal{A}K}$.
\end{lemma}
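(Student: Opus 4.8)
The plan is to reduce the claim to an equivalence between the Euclidean projection $\Pi_K$ and a \emph{weighted} projection onto the same set $K$, and then to settle that equivalence by a semismooth inverse function argument. First I would record an explicit algebraic link between $\Pi_{\mathcal A K}$ and $\Pi_K$. Since $\mathcal A$ is injective and the spaces are finite-dimensional, $\mathcal A$ is a homeomorphism onto the closed subspace $\range(\mathcal A)$, so $\mathcal A K$ is closed and convex and $\Pi_{\mathcal A K}$ is well defined. Put $G:=\mathcal A^*\mathcal A$, which is self-adjoint and positive definite, and let $\Pi_K^G(v):=\arg\min_{w\in K}\langle v-w, G(v-w)\rangle$ be the projection onto $K$ in the inner product $\langle\cdot, G\cdot\rangle$. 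Writing the first-order optimality condition for $\min_{w\in K}\|y-\mathcal A w\|^2$ gives
\[ \Pi_{\mathcal A K}(y)=\mathcal A\,\Pi_K^G\big(G^{-1}\mathcal A^* y\big), \qquad y\in\cF, \]
and, evaluating at $y=\mathcal A v$ together with $G^{-1}\mathcal A^*\mathcal A=I$, the inverse relation $\Pi_K^G=(G^{-1}\mathcal A^*)\circ\Pi_{\mathcal A K}\circ\mathcal A$. Because every linear map is $p$-order semismooth for all $p>0$ and $p$-order semismoothness is preserved under composition, these two identities reduce the lemma to the statement that $\Pi_K$ is $p$-order semismooth if and only if $\Pi_K^G$ is.

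Next I would tie $\Pi_K$ and $\Pi_K^G$ together through the normal-cone operator $N_K$. Both are resolvents, $u=\Pi_K(s)\iff s-u\in N_K(u)$ and $u=\Pi_K^G(y)\iff y-u\in G^{-1}N_K(u)$, so substituting one characterization into the other yields $\Pi_K^G=\Pi_K\circ\phi^{-1}$ with
\[ \phi:=G^{-1}+(I-G^{-1})\Pi_K, \qquad \phi^{-1}=G+(I-G)\Pi_K^G. \]
Both $\phi$ and $\phi^{-1}$ are Lipschitz (since $\Pi_K$ and $\Pi_K^G$ are nonexpansive in their respective norms) and are genuine inverses, so $\phi$ is a bi-Lipschitz homeomorphism of $\cE$; moreover $\phi$ inherits $p$-order semismoothness from $\Pi_K$, being the sum of the linear map $G^{-1}$ and the composition of $I-G^{-1}$ with $\Pi_K$.

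The crux, which I expect to be the main obstacle, is a semismooth inverse function theorem: if $\phi$ is $p$-order semismooth and bi-Lipschitz, then $\phi^{-1}$ is $p$-order semismooth. I would prove it as follows. Bi-Lipschitzness forces every $W\in\partial_B\phi$ to be nonsingular with $\|W^{-1}\|$ uniformly bounded, since the smallest singular value stays bounded away from zero under the limits defining $\partial_B\phi$; consequently $\partial_B\phi^{-1}(y)=\{W^{-1}:W\in\partial_B\phi(\phi^{-1}(y))\}$. Fix $\bar y$, set $\bar x=\phi^{-1}(\bar y)$, and for an increment $k$ let $\eta=\phi^{-1}(\bar y+k)-\bar x$, so that $\|\eta\|=\Theta(\|k\|)$ and $k=\phi(\bar x+\eta)-\phi(\bar x)$. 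For $V=W^{-1}\in\partial_B\phi^{-1}(\bar y+k)$ with $W\in\partial_B\phi(\bar x+\eta)$ one has the identity
\[ \eta-W^{-1}k=-\,W^{-1}\big(\phi(\bar x+\eta)-\phi(\bar x)-W\eta\big), \]
whose right-hand bracket is $\cO(\|\eta\|^{1+p})=\cO(\|k\|^{1+p})$ by applying \eqref{eq:ssn1} to $\phi$ at base point $\bar x$ with $W\in\partial\phi(\bar x+\eta)$; boundedness of $\|W^{-1}\|$ then gives the required $\cO(\|k\|^{1+p})$ estimate on $\partial_B\phi^{-1}$, and it extends to all of $\partial\phi^{-1}=\conv\partial_B\phi^{-1}$ because the estimate is linear in $V$. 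Directional differentiability of $\phi^{-1}$ follows from the same nonsingularity.

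Finally, $\Pi_K$ being $p$-order semismooth implies $\phi^{-1}$ is $p$-order semismooth, hence so is $\Pi_K^G=\Pi_K\circ\phi^{-1}$; the converse is symmetric via $\phi^{-1}=G+(I-G)\Pi_K^G$ and $\Pi_K=\Pi_K^G\circ\phi$. Combined with the reduction of the first paragraph, this yields the desired equivalence for $\Pi_{\mathcal A K}$.
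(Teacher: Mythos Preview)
Your proof is correct and proceeds along a genuinely different route from the paper's. The paper first reduces to the case $\range(\cA)=\cF$, then writes $\Pi_{\cA K}(x)=\cA z(x)$ where $z(x)$ is characterized by the fixed-point equation $F(z,x):=z-\Pi_K[z-\gamma(\cA^*\cA z-\cA^*x)]=0$, verifies that every element of $\pi_z\partial F$ is nonsingular (via $\|W(I-\gamma\cA^*\cA)\|<1$), and invokes the semismooth implicit function theorem of \cite{S01} to conclude that $z(\cdot)$, and hence $\Pi_{\cA K}$, inherits $p$-order semismoothness from $\Pi_K$. Your argument instead makes the change of metric explicit: you factor $\Pi_{\cA K}$ through the $G$-weighted projection $\Pi_K^G$ with $G=\cA^*\cA$, tie $\Pi_K$ and $\Pi_K^G$ together through the bi-Lipschitz map $\phi=G^{-1}+(I-G^{-1})\Pi_K$ whose inverse is $G+(I-G)\Pi_K^G$, and then prove a self-contained semismooth inverse function theorem for bi-Lipschitz maps. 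What the paper's route buys is brevity, since the heavy lifting is delegated to the cited implicit function theorem. What your route buys is a transparent conceptual picture (the whole question is equivalence of Euclidean and weighted projections onto the same $K$) and independence from external results; your inverse-function argument via $\partial_B\phi^{-1}(y)=\{W^{-1}:W\in\partial_B\phi(\phi^{-1}(y))\}$ and the identity $\eta-W^{-1}k=-W^{-1}(\phi(\bar x+\eta)-\phi(\bar x)-W\eta)$ is clean. Two small remarks: the closure of $p$-order semismoothness under composition that you invoke for $\Pi_K\circ\phi^{-1}$ (neither factor linear) is most directly justified via the directional-derivative characterization \eqref{eq:ssn2} together with the chain rule for B-derivatives; and the directional differentiability of $\phi^{-1}$ need not be extracted from nonsingularity at all, since it is immediate from the explicit formula $\phi^{-1}=G+(I-G)\Pi_K^G$ and the directional differentiability of metric projections.
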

\begin{proof} 
Since \(\mathcal{A}\) is an injective linear operator between finite-dimensional spaces and \(K\) is a closed convex set, its image \(\mathcal{A}K\) is also closed and convex. This ensures that the metric projection \(\Pi_{\mathcal{A}K}\) is well-defined.

Next, we show that the $p$-order semismoothness of $\Pi_{\cA K}$ on $\cF$ is equivalent to that of its restriction to $\range(\cA)$. Since $\cA K \subseteq \range(\cA)$, we have $\Pi_{\cA K}(x) = \Pi_{\cA K}(\cB x)$ for any $x\in\cF$, where $\cB:=\Pi_{\range{\cA}}$ is a linear operator. From the definition of the directional derivative, we know that  for any $x,h\in\cF$,
\[\Pi'_{\cA K}(x; h) = \Pi'_{\cA K}(\cB x; \cB h)\]  and 
\begin{equation*}
\begin{aligned}
    &\Pi_{\cA K}(x+h)-\Pi_{\cA K}(x)-\Pi'_{\cA K}(x;h)\\
    = &\Pi_{\cA K}(\cB x+\cB h)-\Pi_{\cA K}(\cB x)-\Pi'_{\cA K}(\cB x; \cB h),
\end{aligned}
\end{equation*}
which demonstrates that the expression depends only on the components within $\range(\cA)$. Therefore, $\Pi_{\cA K}$ is $p$-order semismooth on $\cF$ if and only if its restriction to $\range(\cA)$ is $p$-order semismooth. This allows us to assume, without loss of generality, that $\range(\cA) = \cF$, which implies that $\cA$ is an invertible linear operator.

For $x \in \cF$, let $y = \Pi_{\cA K}(x)$. By definition, $y$ is the unique point in $\cA K$ satisfying the following  inequality:
\[ \langle x-y,c-y \rangle\le 0 \quad \forall \,c \in \cA K. \]
Since $y \in \cA K$ and $\cA$ is a bijection, there exists a unique $z \in K$ such that $y = \cA z$. Similarly, any $c \in \cA K$ can be written as $c=\cA w$ for some $w \in K$. Substituting these into the above  inequality gives:
\[ \langle x - \cA z,\cA w - \cA z\rangle \le 0 \quad \forall \,w \in K. \]
Using the linearity of $\cA$ and the property of the adjoint operator $\cA^*$, we obtain:
\[ \langle \cA^*\cA z - \cA^*x,w-z\rangle \ge 0 \quad \forall w \in K, \]
which is the first-order optimality condition of the strongly convex program 
$$\min_{u \in K} \frac{1}{2}\langle \cA^*\cA u,u\rangle - \langle \cA^*x,u\rangle,$$
whose unique solution is denoted by $z(x)$, and we have $\Pi_{\cA K}(x) = \cA(z(x))$.

Let $0<\lambda_\text{min}\leq\dots\leq\lambda_\text{max}$ be the eigenvalues of $\cA^*\cA$ and $0<\gamma<1/\lambda_\text{max}$. Consider the function $F: \cE \times \cF \to \cE$:
\[ F(z, x) := z - \Pi_K\left[z - \gamma(\cA^*\cA z - \cA^*x)\right]. \]
It is clear that $F(z(x),x)=0$ for any $x\in\cF$. First suppose that $\Pi_K$ is $p$-order semismooth. Since $F$ is a composition of affine operators and $\Pi_K$, it is $p$-order semismooth with respect to the joint variable $(z,x)$. Let $\pi_z \partial F(z,x)$ be the canonical projection of $\partial F(z,x)$ onto the $z$-part. Since $I -\gamma \cA^* \cA$ is invertible, an element $V \in \pi_z \partial F(z,x)$   has the form $V = I - W (I - \gamma \cA^*\cA)$, where $W\in\partial\Pi_K[z - \gamma(\cA^*\cA z - \cA^*x)]$. Since $\Pi_K$ is non-expansive, we have $\|W\| \le 1$. Thus, we have
\[ \|W(I - \gamma \cA^*\cA)\| \le \|W\|\|I - \gamma \cA^*\cA\| \le \max(|1-\gamma\lambda_\text{min}|, |1-\gamma\lambda_\text{max}|) < 1, \]
which implies that $V$ is invertible.  

By noting that the implicit function theorem for the (strongly) semismooth function \cite[Theorem 2.1]{S01} can easily be extended to the $p$-order case, as pointed out at the end of its proof, we know that $F$ satisfies the conditions of the implicit function theorem and $F(z(x),x)=0$ for any $x\in\cF$. Therefore, $z(x)$ is $p$-order semismooth. Recall that $\Pi_{\cA K}(x) = \cA(z(x))$. We have that $\Pi_{\cA K}$ is $p$-order semismooth. Since $\cA$ is invertible, the $p$-order semismoothness of $\Pi_{\cA K}$ also implies the $p$-order semismoothness of $\Pi_K$. This completes the proof.\qed
\end{proof}

\section{A \texorpdfstring{$p$}{p}-order semismooth example}
In this section, we first present that for any $p>0$, there exists an LMI-representable set $K$ such that $\Pi_K$ fails to be $p$-order semismooth. Then combined with Lemma \ref{lem:linear-iso}, we extend this result to the slices of the PSD cone.

Let $n\geq 2$ be an integer. Consider the closed convex cone
\begin{equation*}%\label{Kn}
    \begin{aligned}
        K_n:=\bigg\{(x_1,x_2,x_3,y_1,\dots,y_{n-1},z_1,\dots,z_{n-1})\mid x_3^2\ge y_1^2+z_1^2, \;x_3\ge 0&,\\
        x_3y_i\ge y_{i+1}^2 \text{ for } i=1,\dots, n-2,\quad x_3y_{n-1}\ge x_1^2&,\\
        x_3z_i\ge z_{i+1}^2 \text{ for } i=1,\dots, n-2,\quad x_3z_{n-1}\ge x_2^2&\bigg\}.
    \end{aligned}
\end{equation*}
 Define the following linear operators from $\mathbb{R}^{2n+1}$ to $S^2$:
\begin{equation*}
    \begin{aligned}
        &M_0(p)=
        \begin{pmatrix}
            x_3+y_1&z_1\\
            z_1&x_3-y_1
        \end{pmatrix},\\
        &M_i^y(p)=
        \begin{pmatrix}
            x_3&y_{i+1}\\
            y_{i+1}&y_i
        \end{pmatrix} \text{for } i=1,\dots, n-2,\quad 
        M_{n-1}^y(p)=
        \begin{pmatrix}
            x_3&y_{n-1}\\
            y_{n-1}&x_1
        \end{pmatrix},\\
        &M_i^z(p)=
        \begin{pmatrix}
            x_3&z_{i+1}\\
            z_{i+1}&z_i
        \end{pmatrix} \text{for } i=1,\dots, n-2,\quad 
        M_{n-1}^z(p)=
        \begin{pmatrix}
            x_3&z_{n-1}\\
            z_{n-1}&x_2
        \end{pmatrix}, 
    \end{aligned}
\end{equation*}
where $p:=(x_1,x_2,x_3,y_1,\dots,y_{n-1},z_1,\dots,z_{n-1})\in\mathbb{R}^{2n+1}$. It is easy to check that $K_n$ can be written in the following form:
\begin{equation}
    \begin{aligned}\label{Axform}
        K_n=\Bigg\{&p\in\mathbb{R}^{2n+1} \mid M_0(p) \oplus \Bigg(\bigoplus_{i=1}^{n-1} M_i^y(p)\Bigg) \oplus \Bigg(\bigoplus_{i=1}^{n-1} M_i^z(p)\Bigg) \in S_+^{4n-2}
        \Bigg\}.
    \end{aligned}
\end{equation}
One can see that $K_n$ is an LMI-representable set. For notational simplicity, we denote $\kappa_n:=2^n$. Consider the closed convex cone $S_n$ that represents the projection of $K_n$ onto the space of the first three variables, i.e.,
\begin{equation}\label{link}
    \begin{aligned}
        S_n=&\{(x_1,x_2,x_3)\mid x_1^{\kappa_n}+x_2^{\kappa_n}\le x_3^{\kappa_n},\;x_3\ge 0\} \\
        =&\{(x_1,x_2,x_3)\mid\exists y_i,z_i \text{ for } i=1,\dots,n-1, \\
        &\text{ s.t. } (x_1,x_2,x_3,y_1,\dots,y_{n-1},z_1,\dots,z_{n-1})\in K_n\}.
    \end{aligned}
\end{equation}
Let $\lambda_n:=2^n/(2^n-1)$. Since $\kappa_n$-norm and $\lambda_n$-norm are dual norms, we have
\begin{equation*}
    S_n^\circ=\{(u_1,u_2,u_3)\mid u_1^{\lambda_n}+u_2^{\lambda_n}\le u_3^{\lambda_n},\;u_3\le 0\}.
\end{equation*}
With \eqref{link}, it is easy to know
\begin{equation*}
    S_n^\circ=\{(u_1,u_2,u_3)\mid (u_1,u_2,u_3,0,\dots,0)\in K_n^\circ\}.
\end{equation*}
We have the following lemma about $N_{K_n^{\circ}}$.
\begin{lemma}\label{lem:normal}
    Let $u_1,u_2\geq0$ and $v:=(u_1,u_2,-1,0,\dots,0)\in \boundary(K_n^\circ)$.  Then
    \begin{equation*}
        N_{K_n^{\circ}}(v)=\cone(w),
    \end{equation*}
    where $w:=(u_1^{\lambda_n/2^n},u_2^{\lambda_n/2^n},1,u_1^{\lambda_n/2^1},\dots,u_1^{\lambda_n/2^{n-1}},u_2^{\lambda_n/2^1},\dots,u_2^{\lambda_n/2^{n-1}})$.
\end{lemma}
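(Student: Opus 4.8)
The plan is to invoke the polar-cone identity from the preliminaries, $N_{K_n^\circ}(v)=(K_n^\circ)^\circ\cap v^\perp=K_n\cap v^\perp$, and then to compute this intersection by hand, writing a generic point as $q=(a_1,a_2,a_3,b_1,\dots,b_{n-1},c_1,\dots,c_{n-1})$. First I would extract from the boundary hypothesis the scalar relation $u_1^{\lambda_n}+u_2^{\lambda_n}=1$: membership $v\in K_n^\circ$ together with the description $S_n^\circ=\{(u_1,u_2,u_3)\mid(u_1,u_2,u_3,0,\dots,0)\in K_n^\circ\}$ already gives $u_1^{\lambda_n}+u_2^{\lambda_n}\le1$, and if this were strict then the H\"older estimate below would force any $q\in K_n\cap v^\perp$ to have $a_3=0$, hence $q=0$, placing $v$ in the interior of $K_n^\circ$ and contradicting $v\in\boundary(K_n^\circ)$.

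For the inclusion $\cone(w)\subseteq K_n\cap v^\perp$ I would simply substitute $w$ into the defining inequalities of $K_n$. Because the $x_3$-entry of $w$ equals $1$, each constraint collapses to an identity: $1\ge(u_1^{\lambda_n/2})^2+(u_2^{\lambda_n/2})^2=u_1^{\lambda_n}+u_2^{\lambda_n}=1$, then $1\cdot u_1^{\lambda_n/2^i}\ge(u_1^{\lambda_n/2^{i+1}})^2$ and $1\cdot u_1^{\lambda_n/2^{n-1}}\ge(u_1^{\lambda_n/2^n})^2$, with the $z$-block identical; thus $w\in K_n$. Orthogonality is the computation $\langle w,v\rangle=u_1^{1+\lambda_n/2^n}+u_2^{1+\lambda_n/2^n}-1$, which vanishes once one notes $1+\lambda_n/2^n=1+\tfrac{1}{2^n-1}=\lambda_n$ and uses $u_1^{\lambda_n}+u_2^{\lambda_n}=1$. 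Since $K_n$ is a cone and $v^\perp$ a subspace, this settles the easy direction.

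The reverse inclusion is the crux. Given $q\in K_n\cap v^\perp$, the degenerate case $a_3=0$ is immediate: the constraints $a_3^2\ge b_1^2+c_1^2$, $a_3b_i\ge b_{i+1}^2$, $a_3b_{n-1}\ge a_1^2$ (and their $z$-analogues) force $q=0$. When $a_3>0$, I would square and telescope the $y$-chain to obtain $a_1^{2^n}\le a_3^{2^n-2}b_1^2$, and symmetrically $a_2^{2^n}\le a_3^{2^n-2}c_1^2$; adding and using $a_3^2\ge b_1^2+c_1^2$ gives $a_1^{\kappa_n}+a_2^{\kappa_n}\le a_3^{\kappa_n}$, i.e.\ the first three coordinates lie in $S_n$. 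Feeding the orthogonality relation $a_3=a_1u_1+a_2u_2$ into H\"older's inequality with the dual pair $(\kappa_n,\lambda_n)$ yields
\[
a_3=a_1u_1+a_2u_2\le|a_1|u_1+|a_2|u_2\le(a_1^{\kappa_n}+a_2^{\kappa_n})^{1/\kappa_n}(u_1^{\lambda_n}+u_2^{\lambda_n})^{1/\lambda_n}\le a_3,
\]
so every inequality in the chain is in fact an equality. Equality in Lemma \ref{lem:holder} produces a constant $c\ge0$ with $a_i^{\kappa_n}=c\,u_i^{\lambda_n}$, which together with $a_1^{\kappa_n}+a_2^{\kappa_n}=a_3^{\kappa_n}$ and $u_1^{\lambda_n}+u_2^{\lambda_n}=1$ forces $c=a_3^{\kappa_n}$ and hence $a_i=a_3u_i^{\lambda_n/2^n}$ (nonnegative, because the first step $|a_i|u_i=a_iu_i$ is tight); and tightness of $a_1^{\kappa_n}+a_2^{\kappa_n}\le a_3^{\kappa_n}$ forces every step of the telescoped chain to be an equality, so back-substitution through $a_3b_{n-1}=a_1^2$ and $a_3b_i=b_{i+1}^2$ yields $b_i=a_3u_1^{\lambda_n/2^i}$ and likewise $c_i=a_3u_2^{\lambda_n/2^i}$. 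Thus $q=a_3w\in\cone(w)$.

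The step I expect to be the main obstacle is the final bookkeeping: verifying that equality in the single aggregate bound $a_1^{\kappa_n}+a_2^{\kappa_n}\le a_3^{\kappa_n}$ genuinely propagates back through all $2n$ quadratic constraints, and that solving the resulting tight system reproduces exactly the powers $u_i^{\lambda_n/2^j}$ appearing in $w$. The repeated squaring and the exponent identity $1+\lambda_n/2^n=\lambda_n$ are routine but must be tracked precisely, and the degeneracies $u_1=0$ or $u_2=0$ need the convention $0^{t}=0$ to keep every formula valid.
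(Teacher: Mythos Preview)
Your proposal is correct and follows essentially the same route as the paper: both reduce to computing $K_n\cap v^\perp$, invoke H\"older's inequality with the dual pair $(\kappa_n,\lambda_n)$ to force equality and pin down $a_1,a_2$, and then back-substitute through the quadratic chain to recover the $b_i,c_i$. You are simply more explicit than the paper in several places---deriving $u_1^{\lambda_n}+u_2^{\lambda_n}=1$ from the boundary hypothesis, verifying the easy inclusion $\cone(w)\subseteq K_n\cap v^\perp$, carrying out the telescoping that underlies the paper's appeal to \eqref{link}, and tracking signs and the $u_i=0$ degeneracies---but none of this constitutes a different argument.
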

\begin{proof}
    Since $K_n$ and $K_n^{\circ}$ are closed convex cones, we have
    \begin{equation*}
        N_{K_n^{\circ}}(v)=(K_n^\circ)^\circ\cap v^\perp=K_n \,\cap \, v^{\perp}.
    \end{equation*}
    Let $(x_1,x_2,x_3,y_1,\dots,y_{n-1},z_1,\dots,z_{n-1})\in N_{K_n^{\circ}}(v)$. If $x_3=0$, by the definition of $K_n$, we have $x_1=x_2=0$, $y_i=0$ and $z_i=0$. So, we can assume that  $x_3=1$ since $x_3\ge0$. Then  
    \begin{equation*}
        x_1u_1+x_2u_2=1.
    \end{equation*}
    Note that $1/\kappa_n+1/\lambda_n=1$.  By H\"older's inequality in Lemma \ref{lem:holder}, we have
    \begin{equation*}
        x_1u_1+x_2u_2\le(x_1^{\kappa_n}+x_2^{\kappa_n})^{1/\kappa_n}(u_1^{\lambda_n}+u_2^{\lambda_n})^{1/\lambda_n}\le 1.
    \end{equation*}
    {}From  the equality condition of H\"older's inequality in Lemma \ref{lem:holder}, we have $x_1^{\kappa_n}=u_1^{\lambda_n}$ and $x_2^{\kappa_n}=u_2^{\lambda_n}$. So $x_1=u_1^{\lambda_n/2^n}$ and $x_2=u_2^{\lambda_n/2^n}$ since $u_1,u_2\ge0$ and $x_1u_1+x_2u_2=1$. By using the definition of $K_n$ and noting that $x_1^{\kappa_n}+x_2^{\kappa_n}=1$, we have $y_i=u_1^{\lambda_n/2^i}$ and $z_i=u_2^{\lambda_n/2^i}$. This completes the proof. \qed
\end{proof}

Now, we are ready to state our    main result as in the following theorem.
\begin{theorem}\label{thm:p-semismooth}
    For any $p>0$, there exists an LMI-representable set $K$ such that $\Pi_K$ fails to be $p$-order semismooth.
\end{theorem}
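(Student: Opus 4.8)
The plan is to produce, for each prescribed order $p$, a single member $K_n$ of the family whose projection is nonsmooth of an order that degrades to the Lipschitz threshold as $n\to\infty$. The first reduction is to pass from $\Pi_{K_n}$ to the polar: by the Moreau decomposition $\Pi_{K_n}(x)=x-\Pi_{K_n^\circ}(x)$, the two maps differ by the identity, so $\Pi_{K_n}$ is $p$-order semismooth at a point exactly when $\Pi_{K_n^\circ}$ is. I would therefore work throughout with $\Pi_{K_n^\circ}$, for which Lemma \ref{lem:normal} supplies the normal cones, and hence the exact projections onto the normal rays, at the boundary points $v=(u_1,u_2,-1,0,\dots,0)$. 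Note that, by monotonicity of the definition, failing $p$-order semismoothness is a \emph{stronger} requirement for small $p$, so the decisive regime is $p$ near $0$; this is why the construction is arranged so that $\lambda_n=2^n/(2^n-1)\to 1^+$.

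Next I would fix a corner base point $\bar v=(0,1,-1,0,\dots,0)\in\boundary(K_n^\circ)$, where Lemma \ref{lem:normal} gives the single generating direction $w=(0,1,1,0,\dots,0,1,\dots,1)$, and then evaluate $\Pi_{K_n^\circ}$ along a curve $\bar v+h(\rho)\to\bar v$. The curve should be designed so that each perturbed point lies on the normal ray of a \emph{zero-slack} boundary point $\hat v(\rho)=(a,u_2,-c,0,\dots,0)$, so that Lemma \ref{lem:normal} (extended to third coordinate $-c$ by the cone homogeneity $N_{K_n^\circ}(cv)=N_{K_n^\circ}(v)$) yields $\Pi_{K_n^\circ}(\bar v+h(\rho))=\hat v(\rho)$ \emph{exactly}. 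Solving the normal equation $\bar v+h(\rho)-\hat v(\rho)=t\,w(\hat v(\rho))$ to leading order, the decisive relation is the first-coordinate equation $\rho-a=t\,a^{\lambda_n-1}$ together with $t=\Theta(a^{\lambda_n})$ forced by the remaining coordinates, so that
\[
a(\rho)=\rho-\frac{1}{2\lambda_n}\,\rho^{\,2\lambda_n-1}+o\!\left(\rho^{\,2\lambda_n-1}\right),\qquad 2\lambda_n-1=1+\frac{2}{2^n-1}.
\]
Thus the projection carries an irremovable term of order $1+\mu_n$ with $\mu_n:=2/(2^n-1)$, dictated precisely by the dual exponent $\lambda_n$ and hence by the H\"older equality structure underlying Lemma \ref{lem:normal}.

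From here the conclusion is a direct residual estimate. Since for $\rho>0$ the point $\bar v+h(\rho)$ projects to a smooth boundary point at positive distance with $a,u_2>0$, I would argue that $\Pi_{K_n^\circ}$ is differentiable there (or else select an explicit $V\in\partial\Pi_{K_n^\circ}(\bar v+h(\rho))$), and then the first-coordinate residual in \eqref{eq:ssn1} equals $a(\rho)-a'(\rho)\rho=\Theta(\rho^{\,1+\mu_n})=\Theta(\|h(\rho)\|^{1+\mu_n})$, using $\|h(\rho)\|=\Theta(\rho)$. This is not $\cO(\|h(\rho)\|^{1+p})$ as soon as $p>\mu_n$, so choosing $n$ large enough that $\mu_n=2/(2^n-1)<p$ shows that $\Pi_{K_n^\circ}$, and therefore $\Pi_{K_n}$, fails to be $p$-order semismooth at $\bar v$. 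Finally, because \eqref{Axform} exhibits $K_n=\{p:\cA(p)\in S^{4n-2}_+\}$ with $\cA$ injective, Lemma \ref{lem:linear-iso} transports this failure to the metric projection onto the associated affine slice of $S^{4n-2}_+$, which is the assertion of the theorem.

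I expect the main obstacle to lie in the projection computation in the full lifted cone $K_n^\circ\subseteq\Re^{2n+1}$ rather than in its three-dimensional shadow $S_n^\circ$, where the same mechanism is transparent. The normal ray at $\hat v(\rho)$ carries nonzero slack components $a^{\lambda_n/2^i}$ and $u_2^{\lambda_n/2^i}$ threaded through the chain of $2\times2$ blocks $M_i^y,M_i^z$, so the curve $h(\rho)$ acquires slack coordinates of intermediate size; in particular the $z$-slacks scale like $\rho^{1+1/(2^n-1)}$, which is \emph{between} $\rho$ and the target residual $\rho^{1+\mu_n}$. Controlling these couplings is exactly what is needed to justify the displayed expansion of $a(\rho)$ and, more delicately, the matching \emph{lower} bound $\Theta(\rho^{1+\mu_n})$ on the residual, since an upper bound alone does not establish failure. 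A secondary technical point is to confirm that the perturbed points indeed project to the designed zero-slack boundary points (equivalently, that $\hat v(\rho)$ solves the variational inequality in the lifted space), so that Lemma \ref{lem:normal} applies verbatim and Definition \ref{def:ssn} can be invoked.
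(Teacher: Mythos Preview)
Your plan shares the right skeleton with the paper---reduce to $\Pi_{K_n^\circ}$ via Moreau, exploit Lemma~\ref{lem:normal} at the corner point $(0,1,-1,0,\dots,0)$, and let $n\to\infty$---but the paper's execution is both simpler and avoids the obstacles you identify. The key difference is that the paper keeps the test curve \emph{inside} $K_n^\circ$: it takes $v(t)=(t,(1-t^{\lambda_n})^{1/\lambda_n},-1,0,\dots,0)\in\boundary(K_n^\circ)$, so that $\Pi_{K_n^\circ}(v(t))=v(t)$ and $\Pi_{K_n^\circ}(v(0))=v(0)$ are trivial, and then invokes the directional-derivative characterization~\eqref{eq:ssn2}. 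Because $N_{K_n^\circ}(v(t))=\cone(w(t))$ is a single ray, $\Pi'_{K_n^\circ}(v(t);\,\cdot\,)=\Pi_{T_{K_n^\circ}(v(t))}(\,\cdot\,)$ is projection onto a halfspace, and the residual reduces in one line to $\frac{\langle v(t)-v(0),w(t)\rangle}{\|w(t)\|^2}\,w(t)=\Theta(t^{\lambda_n})$. No implicit equations, no slack-coordinate bookkeeping, and the lower bound is immediate. This also yields the sharper threshold $\lambda_n-1=1/(2^n-1)$ rather than your $\mu_n=2/(2^n-1)$.

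By contrast, your route places $\bar v+h(\rho)$ \emph{outside} $K_n^\circ$ and asks for the exact projection $\hat v(\rho)$ in $\Re^{2n+1}$, which is precisely the coupled computation you flag as the main obstacle; the paper never touches it. There is also a genuine gap in your residual formula: the first-coordinate residual in~\eqref{eq:ssn1} is $a(\rho)-(Vh(\rho))_1$ with $V=\Pi'_{K_n^\circ}(\bar v+h(\rho))$, whereas the chain rule gives $a'(\rho)=(Vh'(\rho))_1$. The identity $(Vh(\rho))_1=\rho\,a'(\rho)$ would require $h(\rho)=\rho\,h'(\rho)$, i.e.\ $h$ positively homogeneous of degree one in $\rho$, which you yourself note fails because $h$ carries slack components of order $\rho^{\lambda_n}$ and beyond. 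Whether the discrepancy is of higher order than $\rho^{1+\mu_n}$ is exactly the delicate lower-bound issue you raise, and it is not established. Finally, the passage through Lemma~\ref{lem:linear-iso} is not needed for the theorem as stated: $K_n$ is already LMI-representable by~\eqref{Axform}, so once $\Pi_{K_n}$ fails $p$-order semismoothness you are done; the transport to an affine slice of $S^{4n-2}_+$ is the content of the subsequent corollary.
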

\begin{proof}
 Consider the following curve:
\begin{equation}\label{formula_v}
    v(t):=(t,(1-t^{\lambda_n})^{1/\lambda_n},-1,0,\dots,0)\in  K_n^\circ,\quad t\in[0,1].
\end{equation}
Let
\begin{equation}\label{formula_w}
    w(t):=(t^{\lambda_n/\kappa_n},(1-t^{\lambda_n})^{1/\kappa_n},1,y_1(t),\dots y_{n-1}(t),z_1(t),\dots z_{n-1}(t)),
\end{equation}
where $y_i(t)=t^{\lambda_n/2^i}$ and $z_i(t)=(1-t^{\lambda_n})^{1/2^i}$. So $N_{K_n^\circ}(v(t))=\cone(w(t))$ by Lemma \ref{lem:normal}. 
Letting $t\to0$, we evaluate the semismoothness as follows:
\begin{equation*}
    \begin{aligned}
        &v(t)-v(0)-\Pi'_{K_n^{\circ}}(v(t);v(t)-v(0))\\
        \overset{(a)}{=}&v(t)-v(0)-\Pi_{T_{K_n^{\circ}}(v(t))}(v(t)-v(0))\\
        =&v(t)-v(0)-\Big(v(t)-v(0)-\frac{\langle v(t)-v(0),w(t)\rangle}{\|w(t)\|^2}w(t)\Big)\\
        =&\frac{\langle v(t)-v(0),w(t)\rangle}{\|w(t)\|^2}w(t)\\
        \overset{(b)}{=}&\Theta(t^{\lambda_n}+{(1-t^{\lambda_n})^{1/\kappa_n}}-1)w(t)\\
        \overset{(c)}{=}&\Theta(t^{\lambda_n}),
    \end{aligned}
\end{equation*}
where $(a)$ follows from \cite[Equation (1.2)]{Za71}, $(b)$ follows from \eqref{formula_v} and \eqref{formula_w}, and $(c)$ comes from $(1+x)^a-1=\Theta(x)$ when $x\to0$ and $a>0$. Note that $\|v(t)-v(0)\|=\Theta(t)$, we have that $\Pi_{K_n^{\circ}}$ is at most $(\lambda_n-1)$-order semismooth from \eqref{eq:ssn2}. 
So, $\Pi_{K_n}$ is also at most $(\lambda_n-1)$-order semismooth. Therefore, Theorem \ref{thm:p-semismooth} holds following by $\lambda_n\to 1$ as $n\rightarrow\infty$.
\qed \end{proof}

Recall that $K_n$ in \eqref{Axform} can be written in the form $\{x\in \mathbb{R}^{2n+1}\mid \mathcal{A}x\in S_+^{4n-2}\}$ where $\mathcal{A}:\mathbb{R}^{2n+1}\to S^{4n-2}$ is a linear operator. It is directly to check that $\mathcal{A}$ is injective. Let $T_n:=\mathcal{A}K_n\subset S_+^{4n-2}$. So $T_n=S_+^{4n-2}\cap \range(\mathcal{A})$. Thus, there exists a linear operator $\mathcal{B}:S^{4n-2}\to\mathbb{R}^m$ such that $T_n=\{X\in S_+^{4n-2}\mid \mathcal{B}X=0\}$. So, by Lemma \ref{lem:linear-iso}, we have that $\Pi_{T_n}$ is at most $(\lambda_n-1)$-order semismooth.

Note that $T_n=S_+^{4n-2}\cap\ker(\mathcal{B})$ is the intersection of the strongly semismooth closed convex cone \cite{SS02} and a linear subspace. Therefore, we have 
\begin{equation}
    \label{sum_form}
    \begin{aligned}
        T_n^{\circ}\overset{(a)}{=}&S_-^{4n-2}+\ker(\mathcal{B})^{\circ}\\
        \overset{(b)}{=}&S_-^{4n-2}+\range(\mathcal{B}^*),
    \end{aligned}
\end{equation}
where $(a)$ comes from \cite[Theorem 6.42]{varbook} and $(b)$ comes from $(\ker(\mathcal{B}))^{\perp}=\range(\mathcal{B}^*)$. From \eqref{sum_form}, $T_n^{\circ}$ is the Minkowski sum of a subspace in $S^{4n-2}$ and a strongly semismooth convex closed cone. However, $\Pi_{T_n^{\circ}}$ shares the same semismooth order with $\Pi_{T_n}$, and thus is at most $(\lambda_n-1)$-order semismooth. Therefore, we have the following corollaries.

\begin{corollary}
    For any $p>0$, there exists a slice $K$ of PSD cone such that $\Pi_K$ fails to be $p$-order semismooth.
\end{corollary}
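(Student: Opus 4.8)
The plan is to deduce the statement directly from Theorem~\ref{thm:p-semismooth} by transporting the LMI example $K_n$ into the PSD cone via the injective operator $\mathcal{A}$ and invoking the linear-isomorphism principle of Lemma~\ref{lem:linear-iso}.

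First I would read off from the representation~\eqref{Axform} that $K_n=\{x\in\mathbb{R}^{2n+1}\mid \mathcal{A}x\in S_+^{4n-2}\}$, where $\mathcal{A}:\mathbb{R}^{2n+1}\to S^{4n-2}$ assembles the diagonal blocks $M_0,M_i^y,M_i^z$. The essential preliminary check is that $\mathcal{A}$ is injective: each of the $2n+1$ coordinates of $x$ occurs as an entry of some diagonal block, so $\mathcal{A}x=0$ forces $x=0$. Injectivity is precisely the hypothesis required to apply Lemma~\ref{lem:linear-iso}.

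Next I would put $T_n:=\mathcal{A}K_n$. Since $\mathcal{A}K_n=\{\mathcal{A}x\mid \mathcal{A}x\in S_+^{4n-2}\}=S_+^{4n-2}\cap\range(\mathcal{A})$, the set $T_n$ is the intersection of the PSD cone with the subspace $\range(\mathcal{A})$. Writing $\range(\mathcal{A})=\ker(\mathcal{B})$ for a suitable linear map $\mathcal{B}:S^{4n-2}\to\mathbb{R}^m$ then exhibits $T_n=\{X\in S_+^{4n-2}\mid \mathcal{B}X=0\}$ as a genuine slice of the PSD cone. By Lemma~\ref{lem:linear-iso}, $\Pi_{T_n}$ and $\Pi_{K_n}$ share the same order of semismoothness, so Theorem~\ref{thm:p-semismooth} yields that $\Pi_{T_n}$ is at most $(\lambda_n-1)$-order semismooth.

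Finally, since $\lambda_n=2^n/(2^n-1)\to 1$ as $n\to\infty$, for any prescribed $p>0$ I would select $n$ large enough that $\lambda_n-1<p$; for this $n$ the slice $K:=T_n$ fails to be $p$-order semismooth, which is the claim. I do not foresee a genuine obstacle, as the corollary is an immediate consequence of the two preceding results; the only points that demand care are the verification of the injectivity of $\mathcal{A}$ (so that the isomorphism lemma applies and the slice identity $\mathcal{A}K_n=S_+^{4n-2}\cap\range(\mathcal{A})$ is valid) and the elementary bookkeeping that $\lambda_n-1$ can be pushed below any given threshold.
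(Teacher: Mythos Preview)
Your proposal is correct and mirrors the paper's argument almost verbatim: transport $K_n$ via the injective $\mathcal{A}$ to $T_n=S_+^{4n-2}\cap\range(\mathcal{A})$, invoke Lemma~\ref{lem:linear-iso} to inherit the at-most $(\lambda_n-1)$-order semismoothness from $\Pi_{K_n}$, and then pick $n$ large enough that $\lambda_n-1<p$. The only cosmetic difference is that the paper places the injectivity check and the identification $T_n=\{X\in S_+^{4n-2}\mid \mathcal{B}X=0\}$ in the discussion preceding the corollary rather than inside the proof itself.
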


\begin{corollary}
    For any $p>0$, there exists a set $K$ such that $K$ is the Minkowski sum of a nonpolyhedral strongly semismooth set and a polyhedral convex set but  $\Pi_K$ fails to be $p$-order semismooth.
\end{corollary}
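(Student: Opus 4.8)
The plan is to exhibit the required set directly from the construction preceding the statement by taking $K := T_n^{\circ}$ for a suitably large $n$. The decomposition \eqref{sum_form} already displays this cone as a Minkowski sum, namely $T_n^{\circ} = S_-^{4n-2} + \range(\mathcal{B}^*)$, so the candidate summands are forced: the negative semidefinite cone $S_-^{4n-2}$ and the linear subspace $\range(\mathcal{B}^*)$. It then suffices to verify three things: that the first summand is a nonpolyhedral strongly semismooth set, that the second is a polyhedral convex set, and that $\Pi_K$ fails to be $p$-order semismooth.

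First I would dispatch the two structural properties of the summands. The subspace $\range(\mathcal{B}^*)$ is the solution set of finitely many linear equalities, hence a polyhedral convex set. For the first summand, since $4n-2 \geq 2$, the cone $S_-^{4n-2}$ is nonpolyhedral: its boundary carries a curved (corank-one) stratum and it possesses infinitely many extreme rays, so it is not an intersection of finitely many halfspaces. To confirm that $S_-^{4n-2}$ is a strongly semismooth set, that is, that $\Pi_{S_-^{4n-2}}$ is strongly semismooth, I would invoke the Moreau decomposition for the cone $S_+^{4n-2}$, which yields $\Pi_{S_-^{4n-2}} = I - \Pi_{S_+^{4n-2}}$; since $\Pi_{S_+^{4n-2}}$ is strongly semismooth \cite{SS02} and the identity is smooth, the difference is strongly semismooth as well.

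It then remains to transfer the negative result to $\Pi_K = \Pi_{T_n^{\circ}}$, which the preceding discussion has effectively prepared. Applying the Moreau decomposition to the cone $T_n$ gives $\Pi_{T_n^{\circ}} = I - \Pi_{T_n}$, so $\Pi_{T_n^{\circ}}$ and $\Pi_{T_n}$ differ only by the smooth identity map and hence share the same semismoothness order. Because $\Pi_{T_n}$ was shown to be at most $(\lambda_n-1)$-order semismooth, so is $\Pi_K$. Given any $p > 0$, since $\lambda_n - 1 = 1/(2^n - 1) \to 0$ as $n \to \infty$, I would fix $n$ large enough that $\lambda_n - 1 < p$; for that choice $\Pi_K$ fails to be $p$-order semismooth, which is what the statement asserts.

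The main obstacle is conceptual rather than computational: one must avoid the natural but false expectation that the semismoothness order of a Minkowski sum is controlled by that of its summands. No such bound holds, and indeed the entire point of the statement is that a nonpolyhedral strongly semismooth set and a polyhedral convex set can combine into a set of arbitrarily poor semismooth order. The correct mechanism is to recognize that the polar cone $T_n^{\circ}$ is already of the desired sum form and to carry the negative result over from the slice $T_n$ through the Moreau identity $\Pi_{T_n^{\circ}} = I - \Pi_{T_n}$, rather than attempting to analyze the sum $S_-^{4n-2} + \range(\mathcal{B}^*)$ directly.
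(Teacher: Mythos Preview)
Your proposal is correct and follows essentially the same route as the paper: take $K=T_n^{\circ}$, use the decomposition \eqref{sum_form} to exhibit it as $S_-^{4n-2}+\range(\mathcal{B}^*)$, and transfer the negative result from $\Pi_{T_n}$ to $\Pi_{T_n^{\circ}}$ via the Moreau identity. You simply make explicit a few details (nonpolyhedrality of $S_-^{4n-2}$, the Moreau step) that the paper leaves implicit in the paragraph preceding the corollary.
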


\section{Conclusion}
In this paper, we have demonstrated that the metric  projection onto a slice of the PSD cone may fail to be $p$-semismooth for any $p>0$. Building on this finding, we conclude our paper by raising the following two natural open questions:

\textbf{Question 1.} According to \cite{SS08}, the metric projection onto a symmetric cone (i.e., a self-dual homogeneous cone) is strongly semismooth. Moreover, every homogeneous cone or symmetric cone is linearly isomorphic to a slice of the PSD cone \cite{C04,Faybusovich02}. By Lemma~\ref{lem:linear-iso}, this implies that there exist many special slices of the PSD cone for which the metric projection is strongly semismooth. A fundamental open problem is to identify sufficient (and necessary) conditions under which the metric projection onto a slice of the PSD cone is strongly semismooth. For instance, is the metric projection onto an arbitrary homogeneous cone always strongly semismooth?

\textbf{Question 2.} In our constructions, we need a higher-dimensional PSD cone to get lower-order semismoothness. On the other hand, the metric projection onto any slice of $S^2_+$ (which is linearly isomorphic to an SOC) must be strongly semismooth. This raises the possibility that the order of semismoothness is intrinsically linked to the dimension of the underlying PSD cone. An important question is therefore to establish the lower bound on the attainable order of semismoothness as a function of the dimension of the PSD cone.

\section*{Declarations}

{\bf Conflict of interest}
The authors have no relevant financial or non-financial interests to disclose.

% Non-BibTeX users please use

\end{document}